\documentclass[11pt]{amsart}

\newif\ifanonymous
\anonymousfalse 

\usepackage{amsmath,amssymb}
\usepackage[T1]{fontenc}
\usepackage{lmodern}
\usepackage[hidelinks]{hyperref}
\hypersetup{
  pdftitle={Every Reflection Group Is Natural},
  pdfsubject={Natural groups, right-invariant metrics, and reflection groups},
  pdfkeywords={natural group, reflection group, right-invariant metric,
    regular permutation group, threshold graph}
}
\ifanonymous
  \hypersetup{pdfauthor={}}
\else
  \hypersetup{pdfauthor={Alex J. Sutherland}}
\fi

\newtheorem{theorem}{Theorem}[section]
\newtheorem{proposition}[theorem]{Proposition}
\newtheorem{lemma}[theorem]{Lemma}
\theoremstyle{remark}
\newtheorem{remark}[theorem]{Remark}

\newcommand{\Isom}{\operatorname{Isom}}
\newcommand{\Aut}{\operatorname{Aut}}

\title[Reflection groups are natural]{Every Reflection Group Is Natural}
\ifanonymous
\else
  \author{Alex J. Sutherland}
  \address{Oregon State University}
\fi

\subjclass[2020]{Primary 20F55; Secondary 05C25, 20B07}
\keywords{Natural group, reflection group, right-invariant metric,
regular permutation group, threshold graph}

\begin{document}

\begin{abstract}
By a reflection group we mean an abstract group generated by nonidentity
involutions.  Knill proved that every reflection group of cardinality at most
the continuum is natural, in the sense that some metric determines its group
structure up to isomorphism among group structures whose right translations
are isometries, and asked whether the cardinality hypothesis can be removed.
We answer this question affirmatively.  More precisely, every reflection group
$G$ admits a right-invariant metric taking at most seven values for which the
full isometry group is the right-regular action of $G$.  The proof replaces
distinct numerical labels on generators by a finite-valued encoding of a
rigid graph on a well-ordered irredundant generating set.
\end{abstract}

\maketitle
\ifanonymous
  \pagestyle{myheadings}
  \markboth{REFLECTION GROUPS ARE NATURAL}{REFLECTION GROUPS ARE NATURAL}
\fi

\section{Introduction}

Following Knill~\cite{Knill}, a group $G$ is \emph{natural} if there is a
metric $d$ on its underlying set such that every group structure on that set
whose right translations are isometries is abstractly isomorphic to $G$.
Write $R(G)$ for the right-regular permutation group of $G$.  A particularly
strong way to establish naturality is to construct a right-invariant metric
such that
\[
  \Isom(G,d)=R(G).
\]

Knill proved that every reflection group of cardinality at most the continuum
is natural and asked whether the cardinality restriction is necessary
\cite{Knill,KnillProblems}.  Here, as in that question, a reflection group is
understood abstractly as a group generated by nonidentity involutions.  We
remove the restriction and obtain the stronger conclusion that the
naturalizing metric may be chosen to take only seven values, independently of
the cardinality of the group.

The cardinality issue is avoided by encoding an arbitrarily large generating
set through the pattern of finitely many distances rather than by assigning a
distinct real length to every generator.  We first choose a well-ordered
irredundant generating set.  A simple maximum-index argument then controls
collisions among products of two generators.  This permits us to encode a
rigidly marked threshold graph on the generator indices into a finite-valued
metric.

The closest antecedent is the canonical edge-colored complete Cayley graph:
the edge joining $g$ and $h$ receives the color
$\{gh^{-1},hg^{-1}\}$.  Byrne et al.~\cite{ByrneDonnerSibley}
observed, in their finite-group classification, that its color-preserving
automorphism group is regular whenever the group is generated by involutions.
Leemann and de la Salle~\cite[Theorem~2]{LeemannDeLaSalleRigid} subsequently
gave the corresponding classification for arbitrary groups.  Thus a fully
colored regular representation is already known in the present setting.  The
point here is the uniform finite coarsening: the full coloring can use one
color for every inverse pair in $G$, whereas our metric uses only six nonzero
distances and still has no isometries beyond the regular action.  Recent work
on color-permuting automorphisms of the complete Cayley graph may be found in
\cite{AlimirzaeiMorris}.

This distinction also separates the result from the literature on graphical
regular representations.  For finite groups, Babai classified the groups that
occur as Euclidean symmetry groups of vertex-transitive
polytopes~\cite{BabaiPolytopes}; his result includes all finite groups generated
by involutions.  He also proved that every infinite group has a directed
graphical regular representation~\cite{Babai}; for undirected Cayley graphs,
strong restrictions remain, and the finitely generated infinite case was
completed by Leemann and de la Salle~\cite{LeemannDeLaSalle}.  Finite-color
representations of permutation groups are naturally expressed in terms of
colored complete graphs; see, for example, Grech and
Kisielewicz~\cite{GrechKisielewicz}.  Standard background on finite threshold
graphs may be found in~\cite{MahadevPeled}, although the transfinite rigidity
argument needed here is proved directly.  We work in ZFC and use the
well-ordering theorem to order an involution generating set.

\begin{theorem}\label{thm:main}
Let $G$ be a group generated by involutions.  Then there is a right-invariant
metric
\[
  d\colon G\times G\longrightarrow [0,2]
\]
taking at most seven values such that
\[
  \Isom(G,d)=R(G).
\]
Consequently, every group generated by involutions is natural.
\end{theorem}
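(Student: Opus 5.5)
We will define $d(g,h)=\ell(gh^{-1})$ for a length-type function $\ell\colon G\to\{0\}\cup[1,2]$ with $\ell(x)=\ell(x^{-1})$ and $\ell(x)=0$ iff $x=1$. Right-invariance is then automatic, and taking all nonzero values in $[1,2]$ makes the triangle inequality automatic. This leaves us free to choose six nonzero values $1<a_1<\dots<a_5<2$ (say $a_i=1+i/10$ and $2$) as pure colors. Since right translations are isometries, $R(G)\le\Isom(G,d)$. Because $R(G)$ is transitive, it suffices to show that an isometry $\phi$ fixing $1$ is the identity.

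\textbf{Setup.} By the well-ordering theorem, choose an irredundant generating set $S=\{s_\alpha:\alpha<\kappa\}$ of nonidentity involutions. To get one, well-order any involution generating set and keep $s$ only if it is not in the subgroup generated by earlier kept elements. Then every element of $S$ lies outside the subgroup generated by its predecessors.

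\textbf{Maximum-index lemma.} If $s_\alpha s_\beta=s_\gamma s_\delta$ with $\alpha\ne\beta$ and $\gamma\ne\delta$, then $\max(\alpha,\beta)=\max(\gamma,\delta)$. Indeed, otherwise the larger top index could be solved for in terms of strictly earlier generators, contradicting irredundance. Similarly, $s_\alpha s_\beta$ is never $1$ and never a generator.

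Next we build a rigid graph on the indices. We fix a graph $\Gamma$ on the ordinal $\kappa$ (plus a few marker vertices if needed) with no nontrivial automorphism. A threshold-type graph works: join $\alpha<\beta$ iff $\beta$ is a successor ordinal, or use a pattern built from the ordinal structure. The key point is that vertex neighborhoods are nested compatibly with the order, so the order itself, and hence every vertex, is recoverable. For the transfinite case we rigidify by marking: we let limit/successor status and the least element carry distinct colors, and show by induction on $\alpha$ that any automorphism fixes $\alpha$.

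\textbf{Coloring.} We set:
\begin{itemize}
\item $\ell(s_\alpha)=a_1$ for all generators;
\item for a product $s_\alpha s_\beta$ with $\alpha<\beta$, $\ell=a_2$ if $\alpha\beta\in E(\Gamma)$ and $a_3$ otherwise;
\item marker colors $a_4,a_5$ on a few specific elements to break symmetries such as $x\mapsto x^{-1}$ conjugation or swapping the two orientations $s_\alpha s_\beta$ versus $s_\beta s_\alpha$;
\item $\ell=2$ on everything else.
\end{itemize}
The maximum-index lemma is exactly what makes this well defined, since distinct unordered pairs with the same top index could collide. We must therefore arrange that the color depends only on data invariant under such collisions, or choose $\Gamma$ so that colliding pairs receive equal colors. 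This is where a threshold structure is convenient: adjacency is decided by the larger index.

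\textbf{Rigidity.} Let $\phi$ fix $1$. Then $\phi$ permutes the $a_1$-sphere, which is $S$. For $s,t\in S$ we have $d(s,t)=\ell(st)$, so $\phi$ induces an automorphism of the colored graph on $S$. Rigidity of $\Gamma$ then gives $\phi|_S=\mathrm{id}$. Then we propagate. Since $\psi_s=R_s\circ\phi\circ R_s^{-1}$ is again an isometry, a standard argument shows $\phi(gs)$ is determined by $\phi(g)$. Concretely, we show that if $\phi$ fixes $g$ and also fixes the points $gs$ in an appropriate sense, then by induction on word length $\phi$ fixes everything. To make this work, we need that $\phi$ fixing $1$ and fixing $S$ forces $\phi$, pulled back to each $g$, to fix $Sg$. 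This we get by applying the rigidity step to the isometry $R_{g}^{-1}\phi R_g$ once we know it fixes $1$.

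\textbf{Main obstacle.} The hard step is the interplay between well-definedness of the coloring on two-letter products, where collisions are controlled only by the maximum index, and the rigidity of the transfinite graph. We must design the threshold graph so that adjacency is a function of the top index together with a marked rank, and still prove that it has no nontrivial automorphism for arbitrary $\kappa$. We would first try the rule "$\alpha<\beta$ adjacent iff $\beta$ is even in its ordinal parity." We would then add marker colors on the generator sphere to distinguish the parity classes, and prove rigidity by transfinite induction using nested neighborhoods.
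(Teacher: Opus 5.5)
Your overall architecture matches the paper's: the greedy irredundant $S$, the maximum-index lemma, lengths in $\{0\}\cup[1,2]$, a colouring of two-letter products that depends only on the top index, and propagation by applying the stabilizer step to $R_g^{-1}\phi R_g$. That last step is a clean way to phrase the paper's ``same marked copy around every $g$''; the neighbours of $g$ are the $sg$, as you eventually write. The gap is the step you call the main obstacle, which is where the paper's real work lies. You never fix a graph on $\kappa$ whose adjacency for $\alpha<\beta$ depends only on $\beta$ and which becomes rigid after finitely many markers, and you never prove rigidity.

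Your first concrete candidate is wrong. If $\alpha<\beta$ are joined iff $\beta$ is a successor, then in each block $\{\lambda+n:n<\omega\}$ the vertices $\lambda+n$ with $n\ge1$ are pairwise adjacent and have identical neighbourhoods outside the block. So for finite $\kappa$ the graph is complete, and marking the least element and limit/successor status still leaves $\mathrm{Sym}(\{1,\dots,\kappa-1\})$. Your second rule ($\beta$ even in ordinal parity) is the complement of the paper's $\Gamma_\kappa$ and does work. However, in any graph where each vertex sees all or none of its predecessors, $0$ and $1$ are twins, so a marker separating $s_0$ from $s_1$ is indispensable. Your colouring section gives every generator length $a_1$. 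Then already for $G=C_2\times C_2$, the map fixing $e$ and $s_0s_1$ and exchanging $s_0,s_1$ is an isometry outside $R(G)$. The paper gives $s_0,s_1$ lengths $1$ and $9/8$; your later parity marking would also serve.

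More seriously, ``induction on $\alpha$'' has no engine. An automorphism fixing every $\beta<\alpha$ may a priori send $\alpha$ to a larger $\gamma$ of the same parity. Such a $\gamma$ has exactly the same adjacency to all $\beta<\alpha$, so nothing visible at stage $\alpha$ excludes it. You need an automorphism-invariant that recovers the ordinal order globally. The paper uses the vicinal preorder $x\preceq y\iff N(x)\setminus\{y\}\subseteq N(y)\setminus\{x\}$:
\begin{itemize}
\item it is total on $\Gamma_\kappa$;
\item its only nonsingleton class is $\{0,1\}$, because $\epsilon$ flips at every successor, so $\alpha+1$ separates any $\alpha<\beta$ with $\epsilon(\alpha)=\epsilon(\beta)$;
\item the quotient is the linear order $Z^{\mathrm{op}}+C+O$ with $Z$ and $O$ well-ordered.
\end{itemize}
Since neither a well-order nor its reverse has a nontrivial automorphism, $\Aut(\Gamma_\kappa)=\{\mathrm{id},(0\;1)\}$, and the marker on $s_0,s_1$ finishes.

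Two smaller points. First, markers separating $s_\alpha s_\beta$ from $s_\beta s_\alpha$ cannot exist: these are mutually inverse, and symmetry of $d$ forces $\ell(x)=\ell(x^{-1})$. They are also not needed. Second, you omit the cases $\kappa\le1$ and the deduction of naturality from $\Isom(G,d)=R(G)$ (a transitive subgroup of a regular group is the whole group). Both are routine.
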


\section{A triangularly ordered involution generating set}

Let $R\subseteq G\setminus\{e\}$ be a set of involutions generating $G$.
Well-order $R$ and inspect its elements in that order.  Retain an element
precisely when it is not contained in the subgroup generated by the elements
retained earlier.  Reindex the retained elements in their inherited order as
\[
  S=\{s_\alpha:\alpha<\kappa\}.
\]

\begin{lemma}[Greedy triangularity]\label{lem:triangularity}
The set $S$ generates $G$, every $s_\alpha$ is an involution, and
\begin{equation}\label{eq:triangularity}
  s_\alpha\notin H_\alpha:=\langle s_\beta:\beta<\alpha\rangle
  \qquad(\alpha<\kappa).
\end{equation}
\end{lemma}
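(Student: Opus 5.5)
The plan is to read all three assertions directly off the greedy construction, using transfinite induction along the well-order of $R$ only for the generation claim. For $r\in R$, write $S_{<r}$ for the set of elements of $S$ that were retained strictly before $r$ was inspected. By construction, $r$ is retained exactly when $r\notin\langle S_{<r}\rangle$.

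The involution claim is immediate, because $S\subseteq R\subseteq G\setminus\{e\}$ and every element of $R$ is an involution. Triangularity \eqref{eq:triangularity} is equally direct. Suppose $s_\alpha$ is the element $r\in R$. Since the reindexing preserves the inherited order, the elements retained before $r$ are exactly $\{s_\beta:\beta<\alpha\}$, so $\langle S_{<r}\rangle = H_\alpha$. Because $r$ was retained, $s_\alpha = r\notin H_\alpha$.

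For generation, it suffices to show $R\subseteq\langle S\rangle$, since $R$ generates $G$. Take $r\in R$. If $r$ was retained, then $r\in S$. Otherwise $r\in\langle S_{<r}\rangle\subseteq\langle S\rangle$ by the rejection rule. This is a one-step argument, and no real induction is needed, because the rejection criterion already refers to the subgroup generated by retained elements rather than by all earlier elements of $R$.

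The only point requiring care is making the greedy process rigorous as a transfinite recursion. The retention decision at $r$ depends on the set of decisions made at all predecessors of $r$, which is well defined by recursion on the well-order of $R$. The retained set, with its induced order, is itself well-ordered and so is order-isomorphic to a unique ordinal $\kappa$, giving the indexing $s_\alpha$. I expect no genuine obstacle. The main thing to state precisely is that the order isomorphism is what identifies $S_{<r}$ with $\{s_\beta:\beta<\alpha\}$, and hence $\langle S_{<r}\rangle$ with $H_\alpha$.
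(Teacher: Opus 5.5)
Your proposal is correct and follows the paper's proof: triangularity is the retention rule itself, each rejected element already lies in the subgroup generated by earlier retained elements so $R\subseteq\langle S\rangle$, and the involution claim is immediate from $S\subseteq R$. Your extra care about the transfinite recursion, and about the order isomorphism identifying $\langle S_{<r}\rangle$ with $H_\alpha$, makes explicit what the paper leaves implicit.
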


\begin{proof}
Property~\eqref{eq:triangularity} is exactly the retention rule.  Every
discarded element of $R$ belongs, at the moment it is discarded, to the
subgroup generated by already retained elements.  Hence every element of $R$
lies in $\langle S\rangle$.  Since $R$ generates $G$, so does $S$.
\end{proof}

For $\alpha>\beta$, define the unoriented two-generator product class
\[
  Q_{\alpha\beta}
  :=\{s_\alpha s_\beta,s_\beta s_\alpha\}
  =\{s_\alpha s_\beta,(s_\alpha s_\beta)^{-1}\}.
\]

\begin{lemma}[Maximum-index collision lemma]\label{lem:collision}
If $\alpha>\beta$, $\gamma>\delta$, and
\[
  Q_{\alpha\beta}\cap Q_{\gamma\delta}\ne\varnothing,
\]
then $\alpha=\gamma$.
\end{lemma}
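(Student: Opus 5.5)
Suppose the two classes meet and, without loss of generality by symmetry, $\alpha\ge\gamma$. We argue by contradiction, assuming $\alpha>\gamma$, and show that $s_\alpha$ would then lie in $H_\alpha$, contradicting Lemma~\ref{lem:triangularity}. Since $Q_{\gamma\delta}$ is closed under inversion, a nonempty intersection means that $s_\alpha s_\beta$ equals either $s_\gamma s_\delta$ or $s_\delta s_\gamma$; the second case has $s_\alpha s_\beta=(s_\gamma s_\delta)^{-1}$. So in either case
\[
  s_\alpha s_\beta = x,\qquad x\in\{s_\gamma s_\delta,\ s_\delta s_\gamma\}.
\]

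Next, multiply on the right by $s_\beta$, using that $s_\beta$ is an involution by Lemma~\ref{lem:triangularity}:
\[
  s_\alpha = x\,s_\beta.
\]
Here $\beta<\alpha$, and $\gamma,\delta<\alpha$ because $\delta<\gamma<\alpha$. Hence all of $s_\beta,s_\gamma,s_\delta$ lie in $H_\alpha=\langle s_\eta:\eta<\alpha\rangle$, and therefore $s_\alpha\in H_\alpha$. This contradicts~\eqref{eq:triangularity}, so $\alpha\le\gamma$. Applying the same argument with the roles of the two pairs exchanged gives $\gamma\le\alpha$, and so $\alpha=\gamma$.

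There is no real obstacle here. The only points to check are that the symmetric assumption is harmless, which holds because the hypothesis is symmetric in the two pairs, and that both elements of $Q_{\gamma\delta}$ are products of generators with indices below $\alpha$. The key observation is that the maximal index $\alpha$ occurs exactly once in the identity $s_\alpha s_\beta=x$, so that identity can be solved for $s_\alpha$ in terms of lower-indexed generators.
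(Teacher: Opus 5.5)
Your proof is correct and matches the paper's argument essentially step for step. You reduce via inverse-closure of $Q_{\gamma\delta}$ to $s_\alpha s_\beta\in\{s_\gamma s_\delta,s_\delta s_\gamma\}$, solve for $s_\alpha$ as a product of generators with indices below $\alpha$ to contradict \eqref{eq:triangularity}, and finish by exchanging the two pairs (your initial ``without loss of generality $\alpha\ge\gamma$'' is redundant since you run both directions, but harmless).
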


\begin{proof}
Assume, for a contradiction, that $\alpha>\gamma$.  If the common element is
represented on the left as $s_\beta s_\alpha$, take inverses; the inverse of
either element of $Q_{\gamma\delta}$ remains in $Q_{\gamma\delta}$.  We are
therefore reduced to one of the equalities
\[
  s_\alpha s_\beta=s_\gamma s_\delta,
  \qquad
  s_\alpha s_\beta=s_\delta s_\gamma.
\]
Because all generators are involutions, these imply, respectively,
\[
  s_\alpha=s_\gamma s_\delta s_\beta,
  \qquad
  s_\alpha=s_\delta s_\gamma s_\beta.
\]
Since $\beta,\gamma,\delta<\alpha$, either equality places $s_\alpha$ in
$H_\alpha$, contradicting~\eqref{eq:triangularity}.  Thus $\alpha>\gamma$ is
impossible.  Interchanging the two pairs rules out $\gamma>\alpha$, so
$\alpha=\gamma$.
\end{proof}

\begin{lemma}[Separation of metric levels]\label{lem:separation}
If $\alpha\ne\beta$, then
\[
  s_\alpha s_\beta\notin\{e\}\cup S.
\]
\end{lemma}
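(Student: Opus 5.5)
Assume $\alpha\ne\beta$ and, by symmetry of the argument under swapping roles, set $\mu=\max(\alpha,\beta)$. We want to show that $s_\alpha s_\beta$ is neither the identity nor a generator. The whole argument rests on the triangularity property~\eqref{eq:triangularity} of Lemma~\ref{lem:triangularity}: each generator can be solved for in terms of the others, and we will always solve for the generator of maximal index $\mu$.

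\emph{The identity.} Suppose $s_\alpha s_\beta=e$. Since $s_\beta$ is an involution, this gives $s_\alpha=s_\beta$. Solving for the generator of larger index puts $s_\mu$ in $H_\mu$, contradicting~\eqref{eq:triangularity}.

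\emph{A generator.} Suppose $s_\alpha s_\beta=s_\gamma$ for some $\gamma$. We split according to where $\gamma$ sits relative to $\alpha$ and $\beta$.
\begin{itemize}
\item If $\gamma=\alpha$, cancelling gives $s_\beta=e$, which is impossible because $S$ consists of nonidentity involutions.
\item If $\gamma=\beta$, right multiplication by $s_\beta$ gives $s_\alpha=e$, again impossible.
\end{itemize}
So $\gamma,\alpha,\beta$ are three distinct indices. Using that each generator is an involution, the relation can be rewritten in three ways:
\[
  s_\alpha=s_\gamma s_\beta,\qquad s_\beta=s_\alpha s_\gamma,\qquad s_\gamma=s_\alpha s_\beta .
\]
Let $\nu$ be the largest of $\alpha,\beta,\gamma$. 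We pick the rewriting that isolates $s_\nu$ on the left. The right-hand side then involves only generators of index less than $\nu$, so $s_\nu\in H_\nu$. This contradicts~\eqref{eq:triangularity}.

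There is no real obstacle. The only point needing care is checking that the three indices are genuinely distinct before taking the maximum, and that is exactly what the first two subcases establish. Lemma~\ref{lem:collision} is not needed; the direct maximum-index argument suffices.
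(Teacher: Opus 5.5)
Your proof is correct and follows the paper's argument essentially step for step. The identity case reduces to $s_\alpha=s_\beta$, the cases $\gamma=\alpha$ and $\gamma=\beta$ fall to cancellation, and the case of three distinct indices is handled by solving for the maximal-index generator and invoking~\eqref{eq:triangularity}; your explicit contradiction from $s_\alpha=s_\beta$ just spells out a step the paper leaves implicit.
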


\begin{proof}
The equality $s_\alpha s_\beta=e$ would imply $s_\alpha=s_\beta$.  Suppose
instead that $s_\alpha s_\beta=s_\gamma$.  If the three indices are distinct,
their maximum occurs only once in the equality and can be solved for as a
product of generators with smaller indices, contradicting
\eqref{eq:triangularity}.  If $\gamma=\alpha$, cancellation gives
$s_\beta=e$; if $\gamma=\beta$, cancellation gives $s_\alpha=e$.  Both are
impossible.
\end{proof}

\section{A marked rigid threshold graph on an ordinal}

Assume for now that $\kappa\geq 2$.  Define
$\epsilon\colon\kappa\to\{0,1\}$ by transfinite recursion:
\[
  \epsilon(0)=0,
  \qquad
  \epsilon(\alpha+1)=1-\epsilon(\alpha),
  \qquad
  \epsilon(\lambda)=0
\]
for every nonzero limit ordinal $\lambda<\kappa$.

Let $\Gamma_\kappa$ be the graph with vertex set $\kappa$ in which, for
$\alpha<\beta$,
\begin{equation}\label{eq:adjacency}
  \alpha\sim\beta
  \quad\Longleftrightarrow\quad
  \epsilon(\beta)=1.
\end{equation}
Thus each new vertex is adjacent either to every earlier vertex or to no
earlier vertex.

For a vertex $x$, write $N(x)$ for its open neighborhood and define the
graph-theoretic vicinal preorder by
\begin{equation}\label{eq:vicinal}
  x\preceq y
  \quad\Longleftrightarrow\quad
  N(x)\setminus\{y\}\subseteq N(y)\setminus\{x\}.
\end{equation}
Every graph automorphism preserves this preorder.

\begin{lemma}[Threshold rigidity]\label{lem:threshold}
For $\kappa\geq2$,
\[
  \Aut(\Gamma_\kappa)=\{\mathrm{id},(0\;1)\}.
\]
In particular, every automorphism fixing both $0$ and $1$ is the identity.
\end{lemma}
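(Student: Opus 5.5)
The plan is to compute the vicinal preorder \eqref{eq:vicinal} of $\Gamma_\kappa$ explicitly, show that it is a total preorder whose only non-singleton class is $\{0,1\}$, and then use rigidity of well-orders. Throughout, write $A_1=\{\beta\ge 2:\epsilon(\beta)=1\}$ and $A_0=\{\beta\ge 2:\epsilon(\beta)=0\}$. From \eqref{eq:adjacency}, for any vertex $\alpha$,
\[
  N(\alpha)=\{\gamma<\alpha:\epsilon(\alpha)=1\}\cup\{\gamma>\alpha:\epsilon(\gamma)=1\}.
\]

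\textbf{Step 1: comparing two vertices.} Fix $\alpha<\beta$.
\begin{itemize}
\item If $\epsilon(\beta)=1$, then $\beta$ is adjacent to every earlier vertex. Every $\gamma>\beta$ is adjacent to both $\alpha$ and $\beta$ or to neither. Hence $N(\alpha)\setminus\{\beta\}\subseteq N(\beta)\setminus\{\alpha\}$, that is, $\alpha\preceq\beta$.
\item If $\epsilon(\beta)=0$, then $\beta$ is adjacent only to later vertices $\gamma$ with $\epsilon(\gamma)=1$, and these are also adjacent to $\alpha$. Hence $\beta\preceq\alpha$.
\end{itemize}
So $\preceq$ is total. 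For the reverse inequalities:
\begin{itemize}
\item When $\epsilon(\beta)=1$, the relation $\beta\preceq\alpha$ forces $\alpha$ to be adjacent to every $\gamma<\beta$ with $\gamma\neq\alpha$. This fails as soon as some $\gamma<\alpha$ exists with $\epsilon(\alpha)=0$, or some $\gamma\in(\alpha,\beta)$ has $\epsilon(\gamma)=0$.
\item When $\epsilon(\beta)=0$, the relation $\alpha\preceq\beta$ forces $N(\alpha)\cap\beta\subseteq\{\beta\}$, which fails once $\alpha$ has a neighbour below $\beta$ other than $\beta$.
\end{itemize}
A short case check, using that $\epsilon$ alternates at successors and vanishes at limits, will then show that $x\preceq y\preceq x$ with $x\neq y$ happens only for $\{x,y\}=\{0,1\}$. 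I expect this bookkeeping to be the most error-prone point, though not deep. The relevant cases are pairs near the bottom, such as $(0,2)$, $(1,2)$, $(1,3)$, and a pair $(\alpha,\alpha+1)$ with $\alpha$ a limit. In each such case a witness $\gamma$ (usually $\gamma=0$, or a vertex between $\alpha$ and $\beta$ with the wrong $\epsilon$) breaks the reverse inclusion. Conversely, $0$ and $1$ are adjacent twins, so $(0\;1)\in\Aut(\Gamma_\kappa)$.

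\textbf{Step 2: the class $\{0,1\}$ is fixed setwise.} Since every automorphism $\sigma$ preserves $\preceq$, it permutes the equivalence classes. The class $\{0,1\}$ is the unique class of size $2$, so $\sigma$ fixes it setwise. A vertex $\beta\ge2$ lies in $A_1$ exactly when it is adjacent to $0$ (equivalently to $1$), and in $A_0$ exactly when it is adjacent to neither. Hence $\sigma$ preserves $A_0$ and $A_1$ setwise.

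\textbf{Step 3: rigidity on $A_1$ and $A_0$.} By Step 1, $\preceq$ restricted to $A_1$ is a strict linear order coinciding with the ordinal order. Restricted to $A_0$ it is the strict reverse of the ordinal order. Therefore $\sigma|_{A_1}$ is an order automorphism of a well-ordered set, and $\sigma|_{A_0}$ is an order automorphism of a well-ordered set after reversal. Both must be the identity, since a well-order has no nontrivial automorphisms: the least moved point would give a contradiction. Consequently $\sigma$ fixes every vertex $\ge2$ and either fixes or swaps $0$ and $1$. This gives $\Aut(\Gamma_\kappa)=\{\mathrm{id},(0\;1)\}$, and the final sentence of the lemma follows immediately.
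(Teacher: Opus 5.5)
Your proposal is correct and follows essentially the paper's route. The vicinal preorder is total with $\{0,1\}$ as its only nonsingleton class, and well-order rigidity then forces every automorphism to fix all vertices $\geq 2$; you separate $A_0$ from $A_1$ by adjacency to $0$, where the paper uses their positions below and above $\{0,1\}$ in the quotient order $Z^{\mathrm{op}}+C+O$. The case check you defer is exactly the paper's distinguishing-vertex argument: the witness is $\alpha+1$ when $\epsilon(\alpha)=\epsilon(\beta)$, any vertex below $\alpha$ when $\epsilon(\alpha)\neq\epsilon(\beta)$ and $\alpha>0$, and the vertex $2$ when $\alpha=0$.
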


\begin{proof}
Let $\alpha<\beta$.  If $\epsilon(\beta)=1$, then every neighbor of $\alpha$
other than $\beta$ is also a neighbor of $\beta$, and hence $\alpha\preceq
\beta$.  If $\epsilon(\beta)=0$, direct inspection gives $\beta\preceq
\alpha$.  Thus $\preceq$ is a total preorder.

Let $x\equiv y$ mean that $x\preceq y$ and $y\preceq x$.  We claim that the
only nonsingleton equivalence class is $C=\{0,1\}$.  The vertices $0$ and $1$
are equivalent because they have identical adjacency to every vertex outside
$\{0,1\}$.

Fix $\alpha<\beta$ with $\{\alpha,\beta\}\ne\{0,1\}$.  If
$\epsilon(\alpha)=\epsilon(\beta)$, then $\beta\ne\alpha+1$, because
$\epsilon$ flips at every successor.  Thus $\alpha+1<\beta$, and the vertex
$\alpha+1$ is adjacent to exactly one of $\alpha$ and $\beta$.  Suppose
instead that $\epsilon(\alpha)\ne\epsilon(\beta)$.  If $\alpha>0$, any vertex
below $\alpha$ distinguishes $\alpha$ and $\beta$.  If $\alpha=0$, then
$\epsilon(\beta)=1$.  Since $\beta\ne1$, we have $2<\beta$, and the vertex
$2$ distinguishes $0$ and $\beta$.  This proves the claim.

Set
\[
  Z=\{\alpha\geq2:\epsilon(\alpha)=0\},
  \qquad
  O=\{\alpha\geq2:\epsilon(\alpha)=1\}.
\]
On the quotient by $\equiv$, the preorder is the linear order
\[
  Z^{\mathrm{op}}+C+O.
\]
Indeed, the order on $Z$ is the reverse of the inherited ordinal order, the
order on $O$ is the inherited ordinal order, every element of $Z$ precedes
$C$, and $C$ precedes every element of $O$.  The sets $Z$ and $O$, with their
inherited orders, are well-ordered.  A well-order and its reverse have no
nontrivial order automorphisms.  Since $C$ is the unique two-element
equivalence class, every graph automorphism fixes every singleton class and
preserves $C$ setwise.  Thus the only possible nonidentity automorphism
exchanges $0$ and $1$.  This exchange is indeed an automorphism.
\end{proof}

\section{The seven-valued metric}

Continue to assume that $\kappa\geq2$.  Define $\ell\colon G\to[0,2]$ by
\begin{align}
  \ell(e)&=0, \label{eq:length-e}\\
  \ell(s_0)&=1, \label{eq:length-s0}\\
  \ell(s_1)&=\frac98, \label{eq:length-s1}\\
  \ell(s_\alpha)&=\frac54 \qquad(2\leq\alpha<\kappa),
    \label{eq:length-generators}
\end{align}
and, for distinct $\alpha,\beta<\kappa$, by
\begin{equation}\label{eq:length-products}
  \ell(s_\alpha s_\beta)=
  \begin{cases}
    \dfrac32,&\epsilon(\max\{\alpha,\beta\})=0,\\[4pt]
    \dfrac74,&\epsilon(\max\{\alpha,\beta\})=1.
  \end{cases}
\end{equation}
Set $\ell(g)=2$ for every remaining $g\in G$.

\begin{lemma}[Well-definedness]\label{lem:well-defined}
The preceding cases define a function $\ell$ on $G$, and
\[
  \ell(g^{-1})=\ell(g)
  \qquad(g\in G).
\]
\end{lemma}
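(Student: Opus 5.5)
We have to check that the case clauses assign at most one value to each element of $G$, and then that $\ell$ is invariant under inversion. The clauses partition $G$ into four levels: $\{e\}$, the generators $S$, the set $P=\bigcup_{\alpha>\beta}Q_{\alpha\beta}$ of products of two distinct generators, and the remainder. The plan is to show that these levels are pairwise disjoint and that, on each level, any two clauses that apply to the same element prescribe the same value.

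\emph{Disjointness of levels.} Each $s_\alpha$ is a nonidentity involution, so $e\notin S$. By Lemma~\ref{lem:separation}, no product $s_\alpha s_\beta$ with $\alpha\ne\beta$ equals $e$ or lies in $S$. Hence $\{e\}$, $S$ and $P$ are pairwise disjoint, and the final clause only assigns $\ell=2$ to elements outside all three.

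\emph{Consistency within a level.} On $\{e\}$ there is nothing to check. On $S$, the map $\alpha\mapsto s_\alpha$ is injective, since $s_\alpha=s_\beta$ with $\beta<\alpha$ would put $s_\alpha$ in $H_\alpha$, contradicting \eqref{eq:triangularity}. So clauses \eqref{eq:length-s0}--\eqref{eq:length-generators} give each generator exactly one value. This is the main point to verify, since the generator values $1,\frac98,\frac54$ differ.

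On $P$, suppose $g=s_\alpha s_\beta=s_\gamma s_\delta$ with $\alpha\ne\beta$ and $\gamma\ne\delta$. Then $g\in Q_{\max(\alpha,\beta),\min(\alpha,\beta)}\cap Q_{\max(\gamma,\delta),\min(\gamma,\delta)}$, so Lemma~\ref{lem:collision} gives $\max\{\alpha,\beta\}=\max\{\gamma,\delta\}$. Since \eqref{eq:length-products} depends only on $\epsilon$ of this maximum, the two prescribed values agree. Thus $\ell$ is a well-defined function.

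\emph{Symmetry.} Since $e^{-1}=e$ and each $s_\alpha^{-1}=s_\alpha$, the equality $\ell(g^{-1})=\ell(g)$ is immediate on the first two levels. For $g=s_\alpha s_\beta\in P$ we have $g^{-1}=s_\beta s_\alpha\in P$, with the same unordered pair of indices and hence the same maximum, so $\ell(g^{-1})=\ell(g)$. Finally, inversion is a bijection that maps each of $\{e\}$, $S$ and $P$ onto itself, so it also maps their complement onto itself, where $\ell\equiv2$. This gives $\ell(g^{-1})=\ell(g)$ for all $g\in G$.
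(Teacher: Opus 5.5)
Your proof is correct and follows the paper's argument. Disjointness of the levels comes from Lemma~\ref{lem:separation}, agreement on products from Lemma~\ref{lem:collision} applied to the maximum index, and symmetry from the inverse-closure of each level; you are only slightly more explicit than the paper in checking that $e\notin S$ and that $\alpha\mapsto s_\alpha$ is injective, which the paper treats as evident from the construction of $S$.
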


\begin{proof}
Lemma~\ref{lem:separation} shows that the three cases---identity, generator,
and product of distinct generators---are disjoint.  Suppose an element has two
representations as products of two distinct generators.  After ordering
each pair by its larger index, Lemma~\ref{lem:collision} shows that the two
maximum indices coincide.  Hence~\eqref{eq:length-products} assigns the same
value to both representations.

The identity and every generator are self-inverse.  The inverse of
$s_\alpha s_\beta$ is $s_\beta s_\alpha$, which has the same maximum index,
and the complement of the already assigned cases is inverse-closed.  Thus
$\ell(g^{-1})=\ell(g)$ for all $g$.
\end{proof}

Define
\begin{equation}\label{eq:metric}
  d(x,y):=\ell(xy^{-1}).
\end{equation}

\begin{lemma}\label{lem:metric}
The function $d$ is a right-invariant metric on $G$ taking values in
$\{0,1,9/8,5/4,3/2,7/4,2\}$.
\end{lemma}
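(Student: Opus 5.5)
We verify the metric axioms directly for $d(x,y)=\ell(xy^{-1})$, using the properties of $\ell$ established in Lemma~\ref{lem:well-defined}.

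\textbf{Right-invariance.} For any $g\in G$ we have $(xg)(yg)^{-1}=xy^{-1}$, so $d(xg,yg)=d(x,y)$.

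\textbf{Range of values.} The values of $d$ are exactly the values of $\ell$. By construction these lie in $\{0,1,9/8,5/4,3/2,7/4,2\}$.

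\textbf{Positivity.} By Lemma~\ref{lem:well-defined}, the case $g=e$ is disjoint from the other cases. Hence $\ell(g)=0$ holds exactly when $g=e$, and so $d(x,y)=0$ exactly when $x=y$.

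\textbf{Symmetry.} We have $d(y,x)=\ell(yx^{-1})=\ell\bigl((xy^{-1})^{-1}\bigr)=\ell(xy^{-1})$, using the inverse-invariance $\ell(g^{-1})=\ell(g)$ from Lemma~\ref{lem:well-defined}.

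\textbf{Triangle inequality.} This is the only step with content. By right-invariance it suffices to prove the subadditivity
\[
  \ell(gh)\le \ell(g)+\ell(h)\qquad(g,h\in G),
\]
since $d(x,z)=\ell\bigl((xy^{-1})(yz^{-1})\bigr)$. We split into cases according to the values of $\ell(g)$ and $\ell(h)$.

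\emph{Case 1: one factor is the identity.} If $g=e$ or $h=e$, the inequality is trivial.

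\emph{Case 2: both factors are nonidentity.} Then $\ell(g),\ell(h)\ge 1$, so
\[
  \ell(g)+\ell(h)\ge 2\ge \ell(gh).
\]

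Thus the range of nonzero values $[1,2]$ makes every triangle automatically valid: no three-element configuration can violate the inequality, because any two nonzero values already sum to at least the maximum value $2$.

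I expect no real obstacle in this proof. The only points needing care have already been settled: that $\ell$ is well defined, that the identity case is disjoint from the others, and that $\ell$ is inverse-invariant. All three are supplied by Lemmas~\ref{lem:separation}, \ref{lem:collision} and~\ref{lem:well-defined}.
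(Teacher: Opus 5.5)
Your proposal is correct and follows essentially the same route as the paper: right invariance is immediate, symmetry comes from $\ell(g^{-1})=\ell(g)$, and the triangle inequality holds because every nonzero value lies in $[1,2]$, so any two nonzero distances sum to at least the maximum value $2$. Your reformulation as subadditivity of $\ell$, with the identity case handled separately, is just a restatement of the paper's split into pairwise distinct points versus coinciding points.
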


\begin{proof}
Symmetry follows from Lemma~\ref{lem:well-defined}, and $d(x,y)=0$ holds
exactly when $x=y$.  For right invariance,
\[
  d(xg,yg)=\ell\bigl(xg(yg)^{-1}\bigr)=\ell(xy^{-1})=d(x,y).
\]
Every nonzero distance lies in $[1,2]$.  If $x,y,z$ are pairwise distinct,
then
\[
  d(x,z)\leq2\leq d(x,y)+d(y,z).
\]
The cases in which two of the three points coincide are immediate, so the
triangle inequality holds.
\end{proof}

\section{The full isometry group}

\begin{proposition}\label{prop:stabilizer}
Every isometry of $(G,d)$ fixing $e$ is the identity.
\end{proposition}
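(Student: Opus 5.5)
Let $\phi$ be an isometry of $(G,d)$ with $\phi(e)=e$. The plan is to show in turn that $\phi$ fixes every generator $s_\alpha$, and then, by a connectivity argument along the Cayley structure, every element of $G$.

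First, $\phi$ preserves the sphere of each radius around $e$, and $\ell(g)=d(g,e)$. Since $s_0$ is the unique element at distance $1$ from $e$ and $s_1$ the unique element at distance $9/8$, we get $\phi(s_0)=s_0$ and $\phi(s_1)=s_1$. Also $\phi$ permutes the set $\{s_\alpha:\alpha\ge2\}$, which is exactly the sphere of radius $5/4$; this uses Lemma~\ref{lem:separation} to see that no product $s_\alpha s_\beta$ lands in that sphere. So $\phi$ induces a bijection $\pi$ of $\kappa$ fixing $0$ and $1$. For distinct $\alpha,\beta$ we have $d(s_\alpha,s_\beta)=\ell(s_\alpha s_\beta)$, which equals $7/4$ if $\alpha\sim\beta$ in $\Gamma_\kappa$ and $3/2$ otherwise, by \eqref{eq:length-products} and \eqref{eq:adjacency}. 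Hence $\pi$ is an automorphism of $\Gamma_\kappa$ fixing $0$ and $1$, and by Lemma~\ref{lem:threshold} it is the identity. Thus $\phi$ fixes $e$ and every $s_\alpha$.

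Next I propagate. Suppose $\phi$ fixes some $g$. Then $\psi:=\rho_g^{-1}\circ\phi\circ\rho_g$, where $\rho_g(x)=xg$, is again an isometry, since right translations are isometries by Lemma~\ref{lem:metric}. It fixes $e$, so by the previous paragraph it fixes every $s_\alpha$. This means $\phi(s_\alpha g)=s_\alpha g$. Since $S$ generates $G$ (Lemma~\ref{lem:triangularity}) and every element is a finite word in involutions, induction on word length gives $\phi=\mathrm{id}$.

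The main obstacle is the first step, where the metric levels must genuinely separate generators from products and encode adjacency exactly. This rests on the well-definedness in Lemma~\ref{lem:well-defined}, which guarantees that $d(s_\alpha,s_\beta)$ depends only on $\epsilon(\max\{\alpha,\beta\})$, and on Lemma~\ref{lem:separation}. Everything after that is routine.
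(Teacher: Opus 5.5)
Your proof is correct and follows the paper's argument. You identify $s_0$, $s_1$ and $\{s_\alpha:\alpha\ge 2\}$ by their distances from $e$, read off $\Gamma_\kappa$ from the values $3/2$ and $7/4$, apply Lemma~\ref{lem:threshold}, and propagate by induction on word length; the only difference is that you propagate by conjugating with the right translation $x\mapsto xg$, whereas the paper directly observes that the same marked copy of $\Gamma_\kappa$ appears around every fixed point $g$, which amounts to the same argument.
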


\begin{proof}
Let $F\in\Isom(G,d)$ satisfy $F(e)=e$.  By
\eqref{eq:length-s0}--\eqref{eq:length-generators}, the point $s_0$ is the
unique point at distance $1$ from $e$, the point $s_1$ is the unique point at
distance $9/8$, and the points at distance $5/4$ are exactly
$\{s_\alpha:2\leq\alpha<\kappa\}$.  Hence $F$ preserves $S$ and fixes
$s_0$ and $s_1$.

For distinct $\alpha,\beta$,
\[
  d(s_\alpha,s_\beta)=\ell(s_\alpha s_\beta),
\]
and the values $3/2$ and $7/4$ reproduce the adjacency relation
\eqref{eq:adjacency}.  Therefore the permutation induced by $F$ on $S$ is an
automorphism of $\Gamma_\kappa$.  It fixes $0$ and $1$, so
Lemma~\ref{lem:threshold} gives
\[
  F(s_\alpha)=s_\alpha
  \qquad(\alpha<\kappa).
\]

We now propagate this conclusion.  Suppose $F(g)=g$.  The points at distances
$1$, $9/8$, and $5/4$ from $g$ are exactly
\[
  s_0g,\qquad s_1g,\qquad\{s_\alpha g:2\leq\alpha<\kappa\},
\]
respectively.  Moreover,
\[
  d(s_\alpha g,s_\beta g)=\ell(s_\alpha s_\beta).
\]
Thus the same marked copy of $\Gamma_\kappa$ occurs around every $g$, and the
preceding argument shows that
\[
  F(g)=g
  \quad\Longrightarrow\quad
  F(s_\alpha g)=s_\alpha g
  \quad\text{for every }\alpha<\kappa.
\]
Starting at $e$ and inducting on finite word length in $S$, we conclude that
$F$ fixes every element of $G$.
\end{proof}

For $g\in G$, let $\rho_g(x)=xg^{-1}$.  Then
\[
  R(G):=\{\rho_g:g\in G\}
\]
is a faithful regular permutation representation of $G$.

\begin{proposition}\label{prop:isometry-group}
For the metric~\eqref{eq:metric}, one has $\Isom(G,d)=R(G)$.
\end{proposition}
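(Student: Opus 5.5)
The plan is the standard ``regular action plus trivial stabilizer'' argument, with two inclusions to check.

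\emph{Inclusion $R(G)\subseteq\Isom(G,d)$.} For $g\in G$, each $\rho_g$ is a bijection of $G$ with inverse $\rho_{g^{-1}}$. Right invariance from Lemma~\ref{lem:metric} then gives
\[
d(\rho_g(x),\rho_g(y))=d(xg^{-1},yg^{-1})=d(x,y).
\]
So every right translation is a surjective isometry, and $R(G)\subseteq\Isom(G,d)$.

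\emph{Inclusion $\Isom(G,d)\subseteq R(G)$.} Let $F\in\Isom(G,d)$ and put $h=F(e)$. Consider $F'=\rho_{h^{-1}}\circ F$, that is, $F'(x)=F(x)h$. This is a composition of isometries, so it is an isometry. It satisfies $F'(e)=hh^{-1}\cdot$\,\ldots{} more precisely, $F'(e)=F(e)h^{-1}\cdot h$ must be arranged to equal $e$; to fix this, take $F'=\rho_h\circ F$, so that $F'(e)=F(e)h^{-1}=e$. By Proposition~\ref{prop:stabilizer}, $F'$ is the identity. Hence $F=\rho_h^{-1}=\rho_{h^{-1}}\in R(G)$.

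One subtlety needs checking: whether isometries are taken to be surjective. Proposition~\ref{prop:stabilizer} only uses distance preservation plus $F(e)=e$, so the argument works either way. It shows that every distance-preserving self-map fixing $e$ is the identity, and therefore every isometry is in fact a right translation, hence surjective.

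The main obstacle is essentially absent, since Proposition~\ref{prop:stabilizer} carries the weight. The only care needed is the bookkeeping of $\rho_g(x)=xg^{-1}$ versus its inverse when normalizing $F(e)$ to $e$.

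Finally, I would note that the case $\kappa\le1$ is outside this section's standing assumption and would be handled separately when completing Theorem~\ref{thm:main}.
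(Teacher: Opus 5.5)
Your argument for both inclusions is correct and is the paper's argument: compose $F$ with a right translation so that it fixes $e$, then apply Proposition~\ref{prop:stabilizer}. In a final write-up, delete the false start. The map $\rho_{h^{-1}}\circ F$ sends $e$ to $h^2$, not $e$. The correct normalization is $\rho_h\circ F$, which gives $F=\rho_{h^{-1}}$, i.e.\ $F(x)=xh$.

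Your side remark on surjectivity, however, is false and should be removed. The proof of Proposition~\ref{prop:stabilizer} does use bijectivity. It needs $F$ to map $S$ \emph{onto} $S$, and each set $\{s_\alpha g:2\le\alpha<\kappa\}$ onto itself, so that $F$ induces an \emph{automorphism} of $\Gamma_\kappa$. Lemma~\ref{lem:threshold} says nothing about self-embeddings. Indeed, the map fixing $0,1$ and sending $n\mapsto n+2$ for $n\ge 2$ is a non-surjective self-embedding of $\Gamma_\omega$.

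This lifts to a counterexample to your claim that every distance-preserving self-map fixing $e$ is the identity. Let $G$ be the free product of countably many copies of $C_2$, with free generators $s_0,s_1,s_2,\dots$. Then $S=\{s_n:n<\omega\}$, $\kappa=\omega$, and $\epsilon(n)\equiv n\pmod 2$. Let $\phi$ be the endomorphism fixing $s_0,s_1$ and sending $s_n\mapsto s_{n+2}$ for $n\ge 2$.

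The induced index map has four relevant properties:
\begin{itemize}
\item it fixes $0$ and $1$;
\item it sends indices $\ge 2$ to indices $\ge 2$;
\item it is strictly increasing;
\item it preserves $\epsilon$.
\end{itemize}
Moreover, $\phi$ sends reduced words to reduced words of the same length. In this group, the elements with $\ell$-value less than $2$ are exactly those of reduced length at most $2$. Hence $\ell\circ\phi=\ell$, and
\[
  d(\phi x,\phi y)=\ell\bigl(\phi(xy^{-1})\bigr)=\ell(xy^{-1})=d(x,y).
\]
So $\phi$ preserves distances and fixes $e$, yet it is not the identity, and its image misses $s_2$.

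The proposition itself is unaffected. Elements of the group $\Isom(G,d)$ are bijections, so $F^{-1}$ is also an isometry fixing $e$. That is exactly what the paper's proof of Proposition~\ref{prop:stabilizer} relies on.
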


\begin{proof}
Right invariance gives $R(G)\leq\Isom(G,d)$.  Given
$F\in\Isom(G,d)$, choose the unique $\rho_g\in R(G)$ for which
$\rho_g(e)=F(e)$.  Then $\rho_g^{-1}F$ fixes $e$, so
Proposition~\ref{prop:stabilizer} implies $\rho_g^{-1}F=\mathrm{id}$.
Hence $F=\rho_g$.
\end{proof}

\begin{proof}[Proof of Theorem~\ref{thm:main}]
If $\kappa\geq2$, use the metric constructed above.  If $\kappa=0$, then
$G$ is trivial.  If $\kappa=1$, then $G\cong C_2$, and the unique nonzero
distance on its two-point underlying set has full isometry group equal to its
regular action.

It remains to verify naturality.  Let $\star$ be any group structure on the
underlying set of $G$ for which all right $\star$-translations are isometries
of $d$.  Its right-translation group $K$ is a regular, hence transitive,
subgroup of
\[
  \Isom(G,d)=R(G).
\]
A subgroup of a regular permutation group is transitive only if it is the
whole regular group: for a fixed point, regularity makes each image correspond
to a unique element of $R(G)$.  Thus $K=R(G)$.  The group $(G,\star)$ is
abstractly isomorphic to its regular translation group, which is isomorphic to
the original group $G$.  Therefore $G$ is natural.
\end{proof}

\begin{remark}
The construction uses only seven metric values, including zero, independently
of the cardinality of $G$.  It therefore avoids assigning a distinct real
length to each reflection.  The metric is discrete and need not reflect any
pre-existing topology on $G$, as permitted by the abstract definition of
naturality in~\cite{Knill}.
\end{remark}

\ifanonymous
\else
\section*{Acknowledgments}

The author thanks Oliver Knill for helpful correspondence and for posing the
question that motivated this work.
\fi

\section*{Disclosure of AI-assisted tools}

The author used OpenAI's GPT-5.6 Sol during proof exploration and preliminary
drafting.  The author independently verified every argument and takes full
responsibility for the content of the manuscript.


\begin{thebibliography}{99}

\bibitem{AlimirzaeiMorris}
S.~Alimirzaei and D.~W.~Morris,
\emph{Colour-permuting automorphisms of complete Cayley graphs},
Art Discrete Appl. Math. \textbf{8} (2025), no.~1, Paper No.~P1.04, 18~pp.,
\url{https://doi.org/10.26493/2590-9770.1795.a62}.

\bibitem{Babai}
L.~Babai,
\emph{Infinite digraphs with given regular automorphism groups},
J. Combin. Theory Ser. B \textbf{25} (1978), no.~1, 26--46,
\url{https://doi.org/10.1016/S0095-8956(78)80008-2}.

\bibitem{BabaiPolytopes}
L.~Babai,
\emph{Symmetry groups of vertex-transitive polytopes},
Geom. Dedicata \textbf{6} (1977), no.~3, 331--337,
\url{https://doi.org/10.1007/BF02429904}.

\bibitem{ByrneDonnerSibley}
D.~P.~Byrne, M.~J.~Donner, and T.~Q.~Sibley,
\emph{Groups of graphs of groups},
Beitr. Algebra Geom. \textbf{54} (2013), no.~1, 323--332,
\url{https://doi.org/10.1007/s13366-012-0093-7}.

\bibitem{GrechKisielewicz}
M.~Grech and A.~Kisielewicz,
\emph{Abelian permutation groups with graphical representations},
J. Algebraic Combin. \textbf{55} (2022), 513--531,
\url{https://doi.org/10.1007/s10801-021-01060-8}.

\bibitem{Knill}
O.~Knill,
\emph{On graphs, groups and geometry},
arXiv:2205.14097v2 [math.GR] (2026),
\url{https://doi.org/10.48550/arXiv.2205.14097}.

\bibitem{KnillProblems}
O.~Knill,
\emph{2 open problems about natural groups},
Quantum Calculus, August 29, 2026,
\url{https://www.quantumcalculus.org/2-open-problems-about-natural-groups/}.

\bibitem{LeemannDeLaSalleRigid}
P.-H.~Leemann and M.~de la Salle,
\emph{Most rigid representation and Cayley index of finitely generated
groups},
Electron. J. Combin. \textbf{29} (2022), no.~4, Paper No.~P4.40, 9~pp.,
\url{https://doi.org/10.37236/10512}.

\bibitem{LeemannDeLaSalle}
P.-H.~Leemann and M.~de la Salle,
\emph{Cayley graphs with few automorphisms: the case of infinite groups},
Ann. Henri Lebesgue \textbf{5} (2022), 73--92,
\url{https://doi.org/10.5802/ahl.118}.

\bibitem{MahadevPeled}
N.~V.~R.~Mahadev and U.~N.~Peled,
\emph{Threshold Graphs and Related Topics},
Annals of Discrete Mathematics, vol.~56, North-Holland, Amsterdam, 1995.

\end{thebibliography}
\end{document}